\newtheorem{thm}{Theorem}[section]
\newtheorem{lem}[thm]{Lemma}
\theoremstyle{definition}
\theoremstyle{remark}
\numberwithin{equation}{section}
\begin{document}

\title[On Runge type theorems]{On approximation theorems for solutions to strongly
 parabolic systems in anisotropic Sobolev spaces }

\author{A.A. Shlapunov}
\address[Alexander Shlapunov]
{Siberian Federal University
                                                 \\
         pr. Svobodnyi 79
                                                 \\
         660041 Krasnoyarsk
                                                 \\
         Russia}
\email{ashlapunov@sfu-kras.ru} 

\author{P.Yu. Vilkov}
\address[Pavel Vilkov]
{Siberian Federal University
                                                 \\
         pr. Svobodnyi 79
                                                 \\
         660041 Krasnoyarsk
                                                 \\
         Russia}
\email{pavel\_vilkov17@mail.ru}

\subjclass {Primary 35A35; Secondary  35N17, 35K25}
\keywords{approximation theorems, anisotropic Sobolev spaces, strongly 
uniformly parabolic  
operators}

\begin{abstract}
We investigate the problem on Runge pairs  for Sobolev solutions of strongly uniformly parabolic
systems in non-cylindrical domains of a special kind. We prove that if the coefficients
of a parabolic operator are constant, then two domains with sufficiently smooth boundaries,
no parts of which are parallel to the plane $t=0$, form a Runge pair if and
only if the complements of any section of the larger domain to the section of the smaller domain
by planes $t = const$, have no compact components in the larger section.
\end{abstract}

\maketitle

\section*{Introduction}
\label{s.Int}

Approximation theorems for solutions to parabolic operators appeared since 1970-th, see 
\cite{D80}, \cite{J},  for the heat equation. Actually, these were generalizations of 
the realated theorems in Complex Analysis  see \cite{H68}, \cite{R1885}, \cite{V67} and 
\cite{Brw62}, \cite{Lax}, \cite{Mal56}, \cite{Mal63}, \cite{MaHa74}, \cite{Merg56}, \cite{Tark36}, 
\cite{Tark37} in the theory of (mostly, elliptic) PDE's. Recently, the interest to approximation theorems 
for general parabolic and elliptical parabolic   systems has essentially increased, see, for instance, 
\cite{EP-S15}, \cite{EG-FP-S19}, \cite{EP-S21}, \cite{G-FRZ}, \cite{GauTa10}, \cite{Klm21},
\cite{RueSa}, \cite{ShHeat}, \cite{VSh2024}, \cite{VKuSh2022}. The main reason for this interest is the following: 
the classical theory of boundary value problems for parabolic and elliptical parabolic   systems was 
developed in cylinder domains where the method of separating variables allowed to use approximation theorems 
for elliptic operators instead, see, for instance, \cite{eid}, \cite{LadSoUr67}, 
\cite{Tema79}. Nowdays, a possibility to consider the boundary problems parabolic systems 
in non-cylinder domains becomes very important.  For instance, it is the case in Cardiology, because the shape of heart changes over time, see, for instance, \cite{KSU_ZAMM} where an the ill-posed problem for a parabolic equation was used in the bi-domain model of the myocardium.

In the pioneer papers, Runge considered a pair of domains $D_1 \subset D_2$ of the complex plane and 
the problem of approximation of holomorphic functions in $D_1$ by holomorphic functions in $D_2$ (here the spaces were endowed with the natural Fr\'echet topology of the space $C(D)$). 
Various authors used different topologies in the approximation theorems (uniform topology of convergence 
on compact subsets, toplogies of Lebesgue spaces, Sobolev spaces, etc.). But it turned out that, for 
elliptic systems, conditions granting for two domains to be a Runge pair are depend essentially on 
the behaviour of the complement $D_2 \setminus D_1$. For the heat equation, domains $G_1 
\subset G_2 \subset {\mathbb R}^{n+1}$ form a Runge pair if and
only if the complements $G_{2} (c) \setminus G_{1} (c)$ of any section $G_{2} (c)$ 
 of the larger domain to the section $G_{1}(c)$ of the smaller domain by hyperplanes $t = c = const$, have no compact components in the larger section.  

Actually, as the Faedo-Galerkin method is still very efficient for solving boundary value problems, the consideration 
of Runge pairs allows to construct series representing a solution to a problem with summands regular at 
a greater domain. Besides, it is important for applications to have approximation theorems 
with respect to toplogies controlling the behaviour of functions up to the boundary of the related domain; see, 
for instance, Runge type approximation theorems in the Lebesgue space for analytic functions \cite{H68}, 
for strongly parabolic operators  \cite{VSh2024}, \cite{VKuSh2022}, and 
approximation theorems in the Sobolev spaces for elliptic operators \cite[Ch. 5]{Tark36}, \cite[Ch. 8]{Tark37}.

In this paper we concentrated our efforts on the Runge type approximation theorems 
for strongly $2m$-parabolic operators  with constant coefficients in anisotropic Sobolev spaces. The primary 
motivation is the systematic use of this type of spaces for solving boundary value problems. 
We rely essentially on the results of the paper \cite{VSh2024} on the Runge type approximation theorems 
for Lebesgue $L^2$-solutions to strongly $2m$-parabolic operators keeping the related denotions in the present paper.

\section{Preliminaries}

Let ${\mathbb R}^n$, $n \geq 1$, be the $n$-dimensional Euclidean space with the coordinates  
$x=(x_1, \dots , x_n)$ and let  $\Omega \subset {\mathbb R}^n$ be a bounded domain 
(open connected set). As usual, denote by  $\overline{\Omega}$ the closure of $\Omega$, and by
 $\partial\Omega$ its boundary.  
 
We consider functions over  ${\mathbb R}^n$ and 
${\mathbb R}^{n+1}$.  As usual, for $s \in {\mathbb Z}_+$ we denote by $C^s(\Omega)$ 
and $C^s(\overline \Omega)$  the spaces of all $s$ times continuously differentiable 
functions on $\Omega$ and $\overline \Omega$, respectively.  
The spaces $C^{s}(\overline \Omega)$ are known to be Banach spaces with the standard  
norms and the  $C(\Omega)$ are the Fr\'echet spaces with the standard semi-norms.

Let  also $L^2 (\Omega)$ be the Lebesgue space over $\Omega$ with the standard inner product 
$(u,v)_{L^2 (\Omega)} $ and let $H^s (\Omega)$, $s\in \mathbb N$, be the Sobolev space  
with the standard inner product $(u,v)_{H^s (\Omega)}$. As usual, we 
consider the Sobolev space $H^{-s} (\Omega)$, $s\in \mathbb N$, as the dual space 
of $H^{s}_0 (\Omega)$ where $H^{s}_0 (\Omega)$ is the closure of the space 
$C^\infty_{\rm comp} (\Omega)$ consisting of smooth 
functions with compact supports in $\Omega$.  

Given $m,s\in \mathbb  N$, 
we also need the anisotro\-pic Sobolev 
spaces $H^{2ms,s} (G)$, $s \in  {\mathbb Z}_+$, in a domain $G\subset {\mathbb R}^{n+1} $ with the standard inner product,
$$
(u,v)_{H^{2ms,s} (G)} = \sum_{|\alpha|+2mj\leq 2ms} 
(\partial^\alpha_x \partial^j_t  u, \partial^\alpha_x \partial^j_t v)_{L^2 (G) }.
$$ 
Besides, for $\gamma \in {\mathbb Z}_+$, we denote by $H^{\gamma,2sm,s} (G)$ 
the set of all functions  $u \in H^{2sm,s} (G)$ such that 
 $\partial ^\beta_x u \in H^{2ms,s} (G)$ for all  $|\beta|\leq \gamma$.  
It is convenient to denote by 
$\mathbf{H}^{2ms,s}_k (G)$ the space of all the $k$-vector functions with 
the components from ${H}^{2ms,s} (G)$, and similarly for the spaces
$\mathbf{L}^{2}_k (G)$, $\mathbf{H}^{\gamma, 2ms,s}_k (G)$, etc. These are known to be Hilbert spaces, see 
\cite{Kry08}. 

 Let  $L$ be a 
$(k\times k)$-matrix differential operator with constant coefficients in 
${\mathbb R}^{n}$ of an even order $2m$:
$$
L = \sum_{|\alpha|\leq 2m} L_{\alpha}  \partial ^\alpha_x 
$$ 
where $L_{\alpha} $ are $(k\times k)$-matrices with real entries 
such that $L^*_\alpha = L_\alpha $ for all multi-indexes $\alpha \in {\mathbb Z}_+^n$ 
with $|\alpha|=2m$. Consider the strongly 
uniformly (Petrovsky) $2m$-parabolic operator  
$$
{\mathcal L}= \partial _t - L,
$$  
see, for instance, \cite{eid}, \cite{sol}. More precisely, this 
additionally means that the operator $(-L)$ is strongly elliptic, i.e. 
there is a positive constant $c_0$ such that 
$$
(-1)^{m+1} w^* \Big(
\sum_{|\alpha| =2m}L_\alpha \zeta ^\alpha \Big) w \geq c_0 |w|^2 |\zeta|^{2mk}
$$
for all $\zeta \in {\mathbb R}^n \setminus 
\{0\}$ and all $w \in {\mathbb C}^k \setminus \{0\}$; here $w^*$ is the transposed and complex 
adjoint vector for the complex vector $w \in {\mathbb C}^k$. 

As usual, we denote by ${\mathcal L}^*$ the formal adjoint operator for ${\mathcal L}$:
$$
{\mathcal L}^*= - \partial _t -  \sum_{|\alpha|\leq 2m} (-1)^{|\alpha|} \partial ^\alpha_x ( 
L^*_{\alpha} (\, \cdot). 
$$

Under these assumptions  the operator ${\mathcal L}$ admits a unique fundamental 
solution  $\Phi (x-y,t-\tau)$ of the convolution type, possessing standard estimates \cite[formulas (2.16), 
(2.17)]{eid}) and the normality property (\cite[Property 2.2]{eid}), i.e. 
\begin{equation} \label{eq.right}
{\mathcal L}_{x,t} \Phi(x-y,t-\tau) = I_k \, \delta (x-y, t-\tau),   
\end{equation}
(here the right hand side equals to the unit matrix $I_k$ multiplied by the Dirac 
distribution at the point $(x,t)$ which is commonly written as $\delta (x-y, t-\tau)$, 
where $\delta $ denotes the Dirac distribution at the origin), and   
\begin{equation} \label{eq.left}
{\mathcal L}^*_{y,\tau} \Phi^*(x-y,t-\tau)  =I_k \delta (x-y, t-\tau), 
\end{equation}
where $\Phi^* = (\Phi_{ji})$ is the adjoint matrix for $\Phi =(\Phi_{ij})$. 

Let $S _{\mathcal L}(G)$ be the set of all the generalized 
$k$-vector functions on $G$, satisfying the (homogeneous)  equation
\begin{equation} \label{eq.heat}
{\mathcal L} u = 0 \mbox{ in } G 
\end{equation}
in the sense of distributions. 
Also, let the space  $S _{\mathcal L}(\overline{G})$ be 
defined as follows: 
$$
\cup_{U \supset 
\overline{G} } S _{\mathcal L}(U),
$$
where the union is with respect to all the domains $U \subset {\mathbb R}^{n+1} $,  containing the closure of the domain  $G$.

Then estimates  
\cite[formulas (2.16), (2.17)]{eid}) for the fundamental solution imply the standard interior 
a priori estimates for solutions to \eqref{eq.heat}, see, for instance, 
\cite[\S 19]{sol}, or \cite[Ch. 4, \S 2]{frid} for the second order operators. This means 
that the operator ${\mathcal L}$ is hypoelliptic, i.e. all the distributional solutions to equation 
\eqref{eq.heat} are $C^\infty$-differentiable on 
their domain. Then the following embeddings hold true:
$$
S _{\mathcal L}(\overline G) \subset S _{\mathcal L}(G) \subset \mathbf{C}^{\infty} _k (G).
$$ 
Moreover, as the coefficients of ${\mathcal L}$ are constant,
 the elements of the spaces $S_{\mathcal L}(G) $, $S _{\mathcal L}(\overline G)$ are  real 
analytic with respect to the space variable $x\in G(t)$ for all $t \in (T_1,T_2)$, 
where $ T_1=\inf_{(x,t)\in G} t$, $ T_2=\sup_{(x,t)\in G} t$, and 
$$
G(t) = \{x \in {\mathbb R}^n: (x,t) \in G\},
$$ 
see, for 
instance \cite{eid}. In particular, the so-called Unique Continuation Property with respect 
to the space variables $x$ for each fixed $t$ holds true for both the parabolic operator $\mathcal L$ and 
the backwards parabolic operator ${\mathcal L}^*$.

Given a pair of domains $G_1\subset G_2 \subset {\mathbb R}^{n+1}$, Runge type approximation theorems 
were proved in \cite{VSh2024} for the Fr\'echet spaces $S _{\mathcal L}(G_1) \supset S _{\mathcal L}(G_2)$ endowed 
with the  topology of the uniform convergence on compact subsets of $G_j$ and the Hilbert spaces 
$$ \mathbf{L}^{2} _{k,\mathcal L}(G_1) =  S _{\mathcal L}(G_1) \cap \mathbf{L}^{2}_k (G_1) \supset 
\, \mathbf{L}^{2} _{k,\mathcal L}(G_2) =  S _{\mathcal L}(G_1) \cap \mathbf{L}^{2}_k (G_2) .
$$

Let us formulate the related result for the spaces 
$ \mathbf{L}^{2} _{k,\mathcal L}(G_j)$ in our particular situation of operators with 
constant  coefficients. With this purpose we need additional  regularity assumptions
for the domains $G_1$ and $G_2$. 
Namely, we assume that the boundary 
of domains $G_j$ satisfies the following properties.

\begin{itemize}
\item[(A)] 
The set $G_2(t)$ is a Lipshitz domain in ${\mathbb R}^n$ for each $t \in (T_1,T_2)$ and for any numbers 
$t_3, t_4$ such that $T_1<t_3<t_4<T_2$ the set $\Gamma _{t_3,t_4}=\cup_{t\in [t_3,t_4]} \partial G (t)$ 
is a Lipschitz surface in ${\mathbb R}^{n+1}$;
\end{itemize}

\begin{itemize}
\item[(A1)]    
For each $t\in [T_1, T_2]$, the sets  
$G_1 (t) = \{x \in {\mathbb R}^n: (x,t) \in G_1\}$, 
are  domains in ${\mathbb R}^n$ with $C^{2m}$-boundaries if $n\geq 2$; 
\item[(A2)] 
The boundary $\partial G_1$ of $G_1$ is the union $G (T_1)\cup  G (T_2) \cup \Gamma $, where 
$$\Gamma =\cup_{t\in (T_1,T_2)} \partial G_1 (t)$$ 
is a $C^{2m,1}$-smooth surface without  points where the tangential planes are parallel to the 
coordinate plane $\{t=0\}$, i.e. we have 
$$
\sum_{j=1}^n  (\nu_j (x,t))^2 \geq \varepsilon _0 \mbox{ for all } (x,t) \in \Gamma
$$
with a positive number $\varepsilon _0$.
\end{itemize}

Of course, a cylinder domain $G = \Omega \times (T_1, T_2)$ 
satisfies assumption (A) if  $\partial \Omega$ is a Lipschitz surface and it 
satisfies assumptions (A1), (A2) if  $\partial \Omega$ is a $C^{2m}$-smooth surface. 

\begin{thm}
\label{t.dense.base.const}
Let  $G_1 \subset G_2 $   be domains 
in  ${\mathbb R}^{n+1}$ such that 
$G_2 \ne {\mathbb R}^{n+1} $. 
If    domain $G_2$ 
satisfies assumption $\mathrm{(A)}$ and bounded domain $G_1$  satisfies $\mathrm{(A1)}$, $\mathrm{(A2)}$ 
then $S_{\mathcal L}(\overline G_2)$ is everywhere dense in the space $\mathbf{L}^{2} _{k,\mathcal L}(G_1)$ 
if and only for each $t \in  (T_1,T_2)$  
the set  $G_2(t)\setminus G_1 (t) $ has no compact (non-empty) components in the set $G_2 (t)$. 
 \end{thm}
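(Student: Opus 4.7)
The plan is to apply Hahn--Banach duality in the Hilbert space $\mathbf{L}^2_{k,\mathcal L}(G_1)$: density of $S_{\mathcal L}(\overline{G_2})$ is equivalent to triviality of its $L^2(G_1)$-orthogonal complement, and each annihilator will be analysed through a Newton-type volume potential built from the fundamental solution $\Phi$. This single tool will handle both directions.

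For sufficiency, I take $v \in \mathbf{L}^2_{k,\mathcal L}(G_1)$ with $(v,u)_{L^2(G_1)} = 0$ for every $u \in S_{\mathcal L}(\overline{G_2})$, extend it by zero to $\mathbb R^{n+1}$, and form
$$
V(y,\tau) = \int_{G_1} \Phi^*(x-y,t-\tau)\, v(x,t)\, dx\, dt.
$$
By \eqref{eq.left}, $\mathcal L^*_{y,\tau} V = v\chi_{G_1}$, so $V$ is a $C^\infty$-solution of $\mathcal L^* V = 0$ on $\mathbb R^{n+1}\setminus \overline{G_1}$ and real-analytic in $x$ for every fixed $t \in (T_1,T_2)$. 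For each $(y_0,\tau_0) \notin \overline{G_2}$ the columns of $(x,t) \mapsto \Phi(x-y_0,t-\tau_0)$ belong to $S_{\mathcal L}(\overline{G_2})$, and orthogonality yields $V(y_0,\tau_0) = 0$; hence $V \equiv 0$ on $\mathbb R^{n+1}\setminus \overline{G_2}$. Now fix $t_0 \in (T_1,T_2)$ and a component $C$ of $G_2(t_0)\setminus \overline{G_1(t_0)}$. The no-compact-component hypothesis forces $\overline C \cap \partial G_2(t_0) \neq \emptyset$; using the Lipschitz regularity of (A), at a contact point lying off $\overline{G_1(t_0)}$ I select a small open ball $B \subset \mathbb R^n\setminus\overline{G_1(t_0)}$ meeting both $C$ and $\mathbb R^n\setminus\overline{G_2(t_0)}$. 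Since $V(\cdot,t_0)$ is real-analytic on the connected component of $\mathbb R^n\setminus\overline{G_1(t_0)}$ containing both and vanishes on the open set $\mathbb R^n\setminus\overline{G_2(t_0)}$, the Unique Continuation Property in $x$ propagates the zero to $C$, and then, by arbitrariness of $t_0$ and $C$, to all of $\mathbb R^{n+1}\setminus\overline{G_1}$. From this external vanishing together with $\mathcal L^* V \in \mathbf L^2(G_1)$ I read off that $V$ has vanishing Cauchy data on $\partial G_1$, so Green's formula and $\mathcal L v = 0$ in $G_1$ give $\|v\|_{L^2(G_1)}^2 = (v, \mathcal L^* V)_{L^2(G_1)} = 0$, i.e.\ $v = 0$.

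For necessity, I assume that for some $t_0$ the set $G_2(t_0)\setminus G_1(t_0)$ has a connected component $K$ with $\overline K \subset G_2(t_0)$. Picking $y_0$ interior to $K$, I form $u_0(x,t) = \Phi(x-y_0, t-t_0) e_j \in \mathbf{L}^2_{k,\mathcal L}(G_1)$ (its singularity lies outside $\overline{G_1}$ but inside the isolated component). Running the potential construction in reverse, starting from a cut-off of $\Phi^*(\cdot-y_0,\cdot-t_0)e_j$ supported in a thin spacetime tube around $K$---which is feasible precisely because $K$ is separated from $\partial G_2$---I produce $v \in \mathbf{L}^2_{k,\mathcal L}(G_1)$ whose potential $V$ is a nontrivial multiple of $\Phi^*(\cdot-y_0,\cdot-t_0)e_j$ near $K$ and vanishes on $\mathbb R^{n+1}\setminus\overline{G_2}$. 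The same column argument as in sufficiency shows $(v,u)_{L^2(G_1)} = 0$ for all $u \in S_{\mathcal L}(\overline{G_2})$, whereas $(v,u_0)_{L^2(G_1)}$ computes via \eqref{eq.left} to a non-zero value, forbidding density.

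The main obstacle is the unique-continuation step in sufficiency: the Lipschitz character of $\partial G_2$ granted by (A) is only just enough for the connecting ball $B$ to exist, and one must exclude the awkward configuration in which $\overline C$ meets $\partial G_2(t_0)$ only at points of $\partial G_1(t_0)$. Equally delicate is showing that the anisotropic $H^{2ms,s}$-regularity of $V$ near $\partial G_1$ dictated by $\mathcal L^* V \in \mathbf L^2(G_1)$, together with the $C^{2m,1}$-smoothness of $\Gamma$ and the non-horizontality condition in (A2), genuinely annihilates all boundary terms in the Green identity used to close the sufficiency argument.
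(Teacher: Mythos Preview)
The paper's proof of this theorem is a single citation to \cite[Theorems~2.2, 2.4]{VSh2024}; the detailed argument you attempt is structurally the same as what the paper later gives for the Sobolev version, Theorem~\ref{t.dense.Sob.const}. Your sufficiency direction follows the correct duality--potential--unique-continuation scheme, but both points you yourself flag as ``obstacles'' are in fact unresolved gaps, not technicalities: the topological linking of each component $C$ of $G_2(t_0)\setminus\overline{G_1(t_0)}$ to the exterior of $G_2(t_0)$ through a region where $V(\cdot,t_0)$ is real-analytic (the paper handles this via the envelopes $\hat G_j(t)$ and \eqref{eq.DeM} rather than your ball argument, precisely to avoid the configuration you worry about), and the passage from $\mathrm{supp}\,V\subset\overline{G_1}$ to $\|v\|^2=0$, which requires either global $H^{2m,1}$-regularity of $V$ so that $V\in H^{2m,1}_0(G_1)$, or an approximation by $C^\infty_0(G_1)$-vectors as in \eqref{eq.supported}. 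You supply neither.

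Your necessity direction contains a genuine error. You assert the existence of $v\in\mathbf L^2_{k,\mathcal L}(G_1)$ whose potential $V$ equals a nonzero multiple of $\Phi^*(\cdot-y_0,\cdot-t_0)e_j$ near $K$. This is impossible: $(y_0,t_0)\notin\overline{G_1}$, so $\mathcal L^*V=v\chi_{G_1}=0$ in a neighbourhood of $(y_0,t_0)$, while $\mathcal L^*\bigl(\Phi^*(\cdot-y_0,\cdot-t_0)e_j\bigr)=\delta_{(y_0,t_0)}e_j\neq 0$; the two cannot agree near $(y_0,t_0)$. ``Running the potential construction in reverse'' is not a construction. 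The correct argument---which the paper writes out for Theorem~\ref{t.dense.Sob.const} and which applies verbatim here---is direct rather than dual: if the column $U_l$ of $\Phi(\cdot-y_0,\cdot-t_0)$ were approximable in $\mathbf L^2_k(G_1)$ by $u^{(i)}\in S_{\mathcal L}(\overline{G_2})$, then choosing $G_3$ with $(y_0,t_0)\in G_3\Subset G_2$ and $\partial G_3\Subset G_1$, interior regularity upgrades the $L^2(G_1)$-convergence to uniform convergence of all derivatives on $\partial G_3$, and passing to the limit in the second Green formula \eqref{eq.Green.parab.2} extends $U_l$ as an $\mathcal L$-solution across $(y_0,t_0)$, contradicting \eqref{eq.right}.
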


\begin{proof} As the Unique Continuation Property with respect 
to the space variables $x$ for each fixed $t$ holds true for both  $\mathcal L$ and 
 ${\mathcal L}^*$, the statement follows from \cite[Theorems 2.2, 2.4]{VSh2024}.
\end{proof}

A similar theorem for the heat equation was obtained in \cite{ShHeat} for cylinder domains and 
in \cite{VKuSh2022} a similar result was proved for the strongly parbolic Lam\'e type system. 
Of course, for general strongly $2m$-parabolic operators with variable coefficients the situation is 
more complicated, but for operators with bounded real analytic coefficients the answer is practically the same, 
see \cite{VSh2024}. 

We want to extend Theorem \ref{t.dense.base.const} to the scale of anisotropic spaces 
$\mathbf{H}^{\gamma, 2ms,s} _{k,\mathcal L}(G_j)$, $1\leq j \leq 2$, $s \in \mathbb N$,
$$
\mathbf{H}^{\gamma, 2ms,s} _{k,\mathcal L}(G) = \mathbf{H}^{\gamma,2ms,s}_k (G) \cap S_{\mathcal L}(G). 
$$
Similarly to the space $\mathbf{L}^{2} _{k,\mathcal L}(G)$, 
by a priori estimates for strongly parabolic operators, the spaces 
$\mathbf{H}^{\gamma, 2ms,s} _{k,\mathcal L}(G)$ are closed subspaces of 
the Hilbert spaces $\mathbf{H}^{\gamma, 2ms,s} _{k}(G)$.

Next, denote by $\tilde{\mathbf{H}}^{-\gamma, -2ms,-s} _{k}(G)$ 
the completion of $\mathbf{C}^{\infty}_k  (\overline G)$ with respect to the norm
$$
\| v\|_{\tilde{\mathbf{H}}^{-\gamma, -2ms,-s} _{k}(G)} = 
 \sup_{ 
\|\varphi\|_{ {\mathbf{H}^{\gamma, 2ms,s} _{k}(G) \leq 1} \atop {\varphi \in \mathbf{H}^{\gamma, 2ms,s} _{k}(G)}
}}
 \Big|(v,\varphi)_{{\mathbf L}^2_k (G)}\Big| , \,\, s \in {\mathbb N};
$$
here $\tilde H$ reflects the fact the usually 
$H^{-s} (G)$ is reserved for the dual space to  $H^{s}_0 (G)$ with slightly different norm. 
As it is known, the space $\tilde{\mathbf{H}}^{-\gamma, -2ms,-s} _{k}(G)$ is a Banach space, see \cite{Adams}. 
The following property is important for the futher exposition. 

\begin{lem} \label{l.appr} 
Let $G$ be a bounded domain in ${\mathbb R}^{n+1}$.  
Then the space $\mathbf{C}^{\infty}_{0,k} (G)$  is everywhere dense in 
$\tilde{\mathbf{H}}^{-\gamma, -2ms,-s} _{k}(G)$.
\end{lem}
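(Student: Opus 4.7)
The plan is to exploit the elementary but crucial inequality
$$
\|u\|_{\tilde{\mathbf{H}}^{-\gamma,-2ms,-s}_{k}(G)}\leq \|u\|_{\mathbf{L}^2_k(G)}, \qquad u \in \mathbf{L}^2_k(G),
$$
which reduces the statement to a routine cutoff argument. By the very definition of $\tilde{\mathbf{H}}^{-\gamma,-2ms,-s}_{k}(G)$ the subspace $\mathbf{C}^{\infty}_{k}(\overline{G})$ is already dense, so by an $\varepsilon/2$-argument it suffices to show that every $v\in\mathbf{C}^{\infty}_{k}(\overline{G})$ can be approximated in the $\tilde{\mathbf{H}}$-norm by elements of $\mathbf{C}^{\infty}_{0,k}(G)$.

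The displayed inequality is immediate from Cauchy--Schwarz: for any test function $\varphi\in\mathbf{H}^{\gamma,2ms,s}_{k}(G)$ with $\|\varphi\|_{\mathbf{H}^{\gamma,2ms,s}_{k}(G)}\leq 1$ one has $\|\varphi\|_{\mathbf{L}^2_k(G)}\leq\|\varphi\|_{\mathbf{H}^{\gamma,2ms,s}_{k}(G)}\leq 1$, and therefore
$$
\bigl|(u,\varphi)_{\mathbf{L}^2_k(G)}\bigr|\leq \|u\|_{\mathbf{L}^2_k(G)}\|\varphi\|_{\mathbf{L}^2_k(G)}\leq \|u\|_{\mathbf{L}^2_k(G)};
$$
taking the supremum over admissible $\varphi$ gives the claim.

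It remains to produce, for each $v\in\mathbf{C}^{\infty}_{k}(\overline{G})$, a sequence $v_j\in\mathbf{C}^{\infty}_{0,k}(G)$ with $\|v-v_j\|_{\mathbf{L}^2_k(G)}\to 0$. Choose scalar cutoffs $\chi_j\in C^{\infty}_{\mathrm{comp}}(G)$ with $0\leq\chi_j\leq 1$ and $\chi_j\equiv 1$ on $G_j=\{(x,t)\in G:\mathrm{dist}((x,t),\partial G)>1/j\}$; such functions exist because the closure of $G_j$ is a compact subset of $G$ (here we use that $G$ is bounded). Setting $v_j:=\chi_j v\in\mathbf{C}^{\infty}_{0,k}(G)$, the difference $v-v_j=(1-\chi_j)v$ is supported in $\overline{G\setminus G_j}$, so the key inequality gives
$$
\|v-v_j\|_{\tilde{\mathbf{H}}^{-\gamma,-2ms,-s}_{k}(G)}\leq \|v-v_j\|_{\mathbf{L}^2_k(G)}\leq \Bigl(\sup_{\overline{G}}|v|\Bigr)\,|G\setminus G_j|^{1/2},
$$
and the right-hand side tends to $0$ as $j\to\infty$: indeed, the sets $G\setminus G_j$ decrease to the empty set (every point of the open set $G$ has positive distance to $\partial G$), and continuity of Lebesgue measure from above applies since $|G|<\infty$.

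The sole substantive step is the $\mathbf{L}^2$-domination of the negative norm; once it is noticed, no finer tools (partitions of unity, mollification, or any regularity of $\partial G$) are required, and the result holds for an arbitrary bounded domain $G$.
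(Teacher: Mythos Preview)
Your proof is correct and follows essentially the same route as the paper: both use that $\mathbf{C}^{\infty}_{k}(\overline{G})$ is dense by definition, that the $\mathbf{L}^2$-norm dominates the $\tilde{\mathbf{H}}^{-\gamma,-2ms,-s}$-norm, and that any $v\in\mathbf{C}^{\infty}_{k}(\overline{G})$ can be approximated in $\mathbf{L}^2_k(G)$ by compactly supported smooth vectors. The only difference is cosmetic---you spell out the Cauchy--Schwarz step and the cutoff construction explicitly, whereas the paper simply invokes these as well-known facts.
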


\begin{proof} Indeed, by the very definition, the space $\mathbf{C}^{\infty}_{k} (\overline G)$  is everywhere 
dense in $\tilde{\mathbf{H}}^{-\gamma, -2ms,-s} _{k}(G)$. On the other hand, as it is well known that 
$\mathbf{C}^{\infty}_{0,k} (\overline G)$ is everywhere dense in the space $\mathbf{L}^{2} _{k}(G)$. 
Now we note that the norm $\|\cdot\|_{\mathbf{L}^{2} _{k}(G)}$ is stronger than the norm 
$\|\cdot\|_{\tilde{\mathbf{H}}^{-\gamma, -2ms,-s} _{k}(G)}$ on $\mathbf{C}^{\infty}_{k} (\overline G)$ for 
any $s \in \mathbb N$. Thus, the statements of this lemma follows because we may approximate any element of 
the space $\mathbf{C}^{\infty}_{k} (\overline G)$ by $\mathbf{C}^{\infty}_{0,k} ( G)$-vectors 
in $\|\cdot\|_{\mathbf{L}^{2} _{k}(G)}$-norm. 
\end{proof}

\section{Approximation in the anisotropic Sobolev spaces}

The main result of this paper is the following theorem.

\begin{thm}
\label{t.dense.Sob.const}
Let  $s \in \mathbb N$, $\gamma \in {\mathbb Z}_+$ $G_1 \subset G_2 $   be domains 
in ${\mathbb R}^{n+1}$ such that 
$G_2 \ne {\mathbb R}^{n+1} $. 
If    domain $G_2$ 
satisfies assumption $\mathrm{(A)}$ and bounded domain $G_1$  satisfies $\mathrm{(A1)}$, $\mathrm{(A2)}$ 
then $S_{\mathcal L}(\overline G_2)$ is everywhere dense in the space $\mathbf{H}^{\gamma,2ms,s} _{k,\mathcal L}(G_1)$ 
if and only for each $t \in  (T_1,T_2)$ 
the set  $G_2(t)\setminus G_1 (t) $ has no compact (non-empty) components in the set $G_2 (t)$. 
 \end{thm}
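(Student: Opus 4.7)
The proof splits into the necessary ($\Rightarrow$) and sufficient ($\Leftarrow$) directions, with sufficiency being the substance. For the \emph{necessity} direction, the obstruction produced in the proof of Theorem \ref{t.dense.base.const}---a suitable translate $\Phi(\cdot - y_0, \cdot - \tau_0)$ of the fundamental solution, based at a point $(y_0, \tau_0)\notin \overline{G_1}$ sitting in a compact component of $G_2(\tau_0)\setminus G_1(\tau_0)$---is automatically $C^\infty$ on $\overline{G_1}$ by (A1)-(A2) and thus already lies in $\mathbf{H}^{\gamma, 2ms, s}_{k, \mathcal{L}}(G_1)$. Since it cannot be $\mathbf{L}^2_k(G_1)$-approximated by $S_{\mathcal{L}}(\overline{G_2})$ whenever the geometric condition fails, a fortiori it cannot be approximated in the stronger anisotropic Sobolev norm.

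For \emph{sufficiency}, I apply Hahn-Banach to reduce to a dual statement. Let $\ell$ be a continuous linear functional on $\mathbf{H}^{\gamma, 2ms, s}_{k, \mathcal{L}}(G_1)$ vanishing on $S_{\mathcal{L}}(\overline{G_2})$, and extend it to the ambient Hilbert space $\mathbf{H}^{\gamma, 2ms, s}_k(G_1)$ by Hahn-Banach. By the very definition of the norm on $\tilde{\mathbf{H}}^{-\gamma, -2ms, -s}_k(G_1)$, together with Lemma \ref{l.appr}, this extension is represented by an element $v \in \tilde{\mathbf{H}}^{-\gamma, -2ms, -s}_k(G_1)$ via $\ell(\varphi) = (v, \varphi)_{\mathbf{L}^2_k(G_1)}$. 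The hypothesis becomes $(v, u)_{\mathbf{L}^2_k(G_1)} = 0$ for all $u \in S_{\mathcal{L}}(\overline{G_2})$, and the goal is to extend this vanishing to every $u \in \mathbf{H}^{\gamma, 2ms, s}_{k, \mathcal{L}}(G_1)$.

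The core tool is the Cauchy-type parabolic potential on the complement of $\overline{G_1}$,
$$
w(y, \tau) \;=\; \bigl\langle v,\, \Phi^*(\,\cdot - y,\, \cdot - \tau)\bigr\rangle_{\mathbf{L}^2_k(G_1)}, \qquad (y, \tau)\notin \overline{G_1}.
$$
The kernel is $C^\infty$ on $\overline{G_1}$ for such $(y, \tau)$, hence lies in $\mathbf{H}^{\gamma, 2ms, s}_k(G_1)$, making the pairing well defined; differentiating under the pairing and invoking \eqref{eq.left} gives $\mathcal{L}^*_{y, \tau} w = 0$ on ${\mathbb R}^{n+1}\setminus \overline{G_1}$. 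When $(y, \tau)$ lies outside $\overline{G_2}$, the kernel viewed as a function of $(x, t)$ is a solution of the transposed $\mathcal{L}$-equation on a neighborhood of $\overline{G_2}$, and the symmetry $L_\alpha^* = L_\alpha$ at the leading order together with \eqref{eq.right}-\eqref{eq.left} permits coupling this to the class $S_{\mathcal{L}}(\overline{G_2})$, so the annihilation hypothesis forces $w \equiv 0$ on ${\mathbb R}^{n+1}\setminus \overline{G_2}$. The "no compact components" assumption, combined with the spatial Unique Continuation Property for $\mathcal{L}^*$ applied slice-by-slice in $t$ (available because solutions with constant coefficients are real-analytic in $x$), then propagates this vanishing across $G_2\setminus \overline{G_1}$, so that $w \equiv 0$ on all of ${\mathbb R}^{n+1}\setminus \overline{G_1}$.

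To conclude, I approximate $v$ by $v_n \in \mathbf{C}^\infty_{0, k}(G_1)$ in the $\tilde{\mathbf{H}}^{-\gamma, -2ms, -s}_k$-norm via Lemma \ref{l.appr}; the associated classical volume potentials $w_n = \Phi^* * v_n$ converge to $w$ uniformly on compacta outside $\overline{G_1}$. For any $u \in \mathbf{H}^{\gamma, 2ms, s}_{k, \mathcal{L}}(G_1)$, a Green's identity on $G_1$---whose validity rests on the $C^{2m, 1}$-regularity and transversality of $\Gamma$ from (A1)-(A2)---rewrites $(v_n, u)_{\mathbf{L}^2_k(G_1)}$ as a sum of anisotropic boundary integrals over $\Gamma$ and the time slices $G_1(T_1)\cup G_1(T_2)$, involving traces of $u$ and of $w_n$ together with their derivatives. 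Since $w \equiv 0$ outside $\overline{G_1}$, the exterior traces of $w_n$ tend to zero, and passing to the limit in the duality pairing yields $(v, u)_{\mathbf{L}^2_k(G_1)} = 0$, as required. The principal obstacle will be the rigorous justification of this Green's identity at the full anisotropic regularity---in particular, controlling the exterior traces of $w_n$ on $\partial G_1$ and interchanging limits in the pairing in $\tilde{\mathbf{H}}^{-\gamma, -2ms, -s}_k(G_1)$: assumption (A2), namely $\sum_j \nu_j^2 \ge \varepsilon_0$ on $\Gamma$, is precisely what ensures the anisotropic trace operators on $\partial G_1$ behave well.
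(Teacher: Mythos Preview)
Your necessity argument and the first half of your sufficiency argument (Hahn--Banach extension, the potential $w$, the slice-by-slice unique continuation showing $\mathrm{supp}\,w\subset\overline{G_1}$) match the paper's proof essentially line for line. The divergence is in how you close the argument once you know $w$ is supported in $\overline{G_1}$.

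You propose to approximate $v$ by $v_n\in\mathbf{C}^\infty_{0,k}(G_1)$, set $w_n=\Phi^**v_n$, write $(v_n,u)_{\mathbf{L}^2_k(G_1)}$ via a Green identity on $G_1$ as boundary integrals in traces of $u$ and $w_n$, and then pass to the limit using that $w\equiv 0$ outside $\overline{G_1}$. You yourself flag the obstacle: convergence of $w_n$ to $w$ ``uniformly on compacta outside $\overline{G_1}$'' gives no control of traces \emph{on} $\partial G_1$, and that is exactly what the boundary integrals require. Promoting this to genuine trace convergence in the right anisotropic norms is not a detail---it is the whole difficulty, and assumption (A2) by itself does not deliver it.

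The paper sidesteps this entirely with a \emph{double} approximation that never touches the boundary. First approximate $\psi$ by $\psi_i\in\mathbf{C}^\infty_{0,k}(G_1)$ in $\tilde{\mathbf{H}}^{-\gamma,-2ms,-s}_k(G_1)$ and set $W_i=\Phi^**\psi_i$, so that $\mathcal{L}^*W_i=\psi_i$ and each $W_i$ is globally smooth. Then---and this is the key move you are missing---approximate each $W_i$ itself by $V_{i,j}\in\mathbf{C}^\infty_{0,k}(G_1)$ in the \emph{one-step-higher} negative space $\tilde{\mathbf{H}}^{-\gamma,2m(1-s),1-s}_k(G_1)$, using Lemma~\ref{l.appr} again. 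Since $\mathcal{L}^*$ maps $\tilde{\mathbf{H}}^{-\gamma,2m(1-s),1-s}_k\to\tilde{\mathbf{H}}^{-\gamma,-2ms,-s}_k$ continuously (the duality estimate \eqref{eq.est.norm.1}--\eqref{eq.est.norm.3}), one gets $\mathcal{L}^*V_{i,j}\to\psi_i$ in $\tilde{\mathbf{H}}^{-\gamma,-2ms,-s}_k(G_1)$. Now the integration by parts $\langle u,\mathcal{L}^*V_{i,j}\rangle=\langle\mathcal{L}u,V_{i,j}\rangle=0$ is free of boundary terms because $V_{i,j}$ has compact support in $G_1$, and the double limit yields $F(u)=0$. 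No Green formula on $G_1$, no trace theorems, no (A2) at this stage---the geometry was already fully spent in proving $\mathrm{supp}\,w\subset\overline{G_1}$.
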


\begin{proof} We begin with the necessity. To prove it  we may use the arguments similar to the proof of 
\cite[Theorem 1.2]{VSh2024} adapting them for the present situation. 

Indeed, as the boundary of the domain $G_1$ is at least $C^1$-smooth
then the complement $G_2\setminus \overline G_1$ is an open set with Lipschitz boundary. 
If there is a number $t_0\in \mathbb R$ such that the set 
$G_2 (t_0) \setminus G_1 (t_0) $  has a compact non-empty (connected) component $K (t_0)$ in the set 
$G_2 (t_0)$ then $K(t_0)$ is a closure of a  domain $D_0 \subset G_2 (t_0)$ with $C^1$-smooth boundary.   
We have to prove that there is a vector $u\in {\mathbf H}^{2ms,s}_{k,\mathcal L} (G_1)$ 
that can not be approximated by elements of $S_{\mathcal L} (\overline G_2)$. 
Now, if a point $y_0 \in K_0$, then  $(y_0,t_0)$ is an interior point of $G_2\setminus \overline G_1$ an hence 
any vector column $U_l (x,t)$, $1\leq l \leq k$, 
of the fundamental matrix $\Phi  (x-y_0, t-t_0)$ 
belongs to the space ${\mathbf H}^{\gamma,2ms,s}_{k,{\mathcal L}} (G_1)$.

Now we may envoke the (second) Green formula for the operator ${\mathcal L}$, see, for instance, 
\cite{Tark35} for general operators admitting regualar left fundamental solutions or \cite{svesh} 
for the second order parabolic operators.
Namely, let ${\mathcal G}_{ L}$  be  a Green bi-differential operator for the operator $L$, 
\cite[\S 2.4.2]{Tark35}. As it is known, 
it has order $(2m-1)$ with respect to the space variables $x$, acting from 
$ \mathbf{C}^{2m,1}_k (\overline G_2) \times \mathbf{C}^{2m,1}_k (\overline G_2) $
to the space of $n$-differential forms with  coefficients from $C^1 (\overline G_2)$, i.e. 
\begin{equation}\label{eq.Green.L}
\int_{\partial G_3 }
 {\mathcal G}_{ L} (g,v) = (Lv,g)_{\mathbf{L}^2 _k(G_3)} -
( v,{L} ^* g)_{\mathbf{L}^2 _k(G_3)} \mbox{ for all } g,v\in  \mathbf{C}^{2m,1}_k (\overline G_3) 
\end{equation}
 and any domain $G_3\Subset G_2$ with piecewise smooth boundary. 
As the Green operator of a sum of differential operators can be presented as the sum of the corrresponding 
Green operators, then for ${\mathcal L}$ we obtain: 
$$
 {\mathcal G}_{ \mathcal L} (g,v) = g^* v dx -  {\mathcal G}_{ L} (g,v),
$$
and   then the (first) Green formula holds true:
\begin{equation}\label{eq.Green.parab}
\int_{\partial G_3 }
 {\mathcal G}_{ \mathcal L} (g,v) = ({\mathcal L}v,g)_{\mathbf{L}^2 _k(G_3)} - 
( v,{\mathcal L} ^* g)_{\mathbf{L}^2 _k(G_3)} \mbox{ for all } g,v\in  \mathbf{C}^{2m,1}_k (\overline G_3). 
\end{equation}

Since $(y_0,t_0)$ is an interior point of $G_2\setminus \overline G_1$, 
we may choose a bounded domain  $G_3$ with a piecewise smooth boundary $\partial G_3$ such 
that $(y_0,t_0) \in G_3 \Subset G_2$ and $\partial G_3 \Subset G_1$.  If the vector function 
$U_l (x,t)$ can be approximated in ${\mathbf H}^{\gamma, 2ms,s}_k  (G_1)$, $s\in \mathbb N$,  
by a sequence $\{ u^{(i)}_l \}_{i\in \mathbb N}$ from the 
space $S_{\mathcal L} (\overline G_2)$ 
then  the sequences of the partial derivatives  
$\{ \partial ^\beta_x \partial ^\alpha_x \partial_t^j u^{(i)}_l \}$, $|\alpha|+2mj\leq 2ms$, $|\beta|\leq \gamma$, 
converge uniformly on $\partial G_3$. On the other hand,  (the first) Green formula
\eqref{eq.Green.parab} and the normality property \eqref{eq.left} of the fundamental solution $\Phi$ 
imply the (second) Green formulas for all $i\in \mathbb N$: 
\begin{equation}\label{eq.Green.parab.2}
u^{(i)}_l (x,t) = - \int_{\partial G_3 }
 {\mathcal G}_{ \mathcal L} (\Phi (x-y,t-\tau), u^{(i)}_l (y,\tau)) \mbox{ for all } (x,t) \in G_3.
\end{equation}
Note that there is no need to assume that $G_3$ is a cylinder domain because this Green formula 
is a corollary of the \textit{local} reproducing property of the fundamental solution. 
Now, passing to the limit with respect to $i\to +\infty$ in \eqref{eq.Green.parab.2} we obtain 
\begin{equation}\label{eq.Green.parab.3}
U_l (x,t) = - \int_{\partial G_3 }
 {\mathcal G}_{ \mathcal L} (\Phi (x-y,t-\tau), U_l (y,\tau)) \mbox{ for all } (x,t) \in G_3 \cap G_1.
\end{equation}
However, since $\Phi$ is a fundamental solution to ${\mathcal L}$ then the right-hand side of formula 
\eqref{eq.Green.parab.3} belongs to $S_{\mathcal L} (G_3)$. Therefore 
the vector function $U_l $ extends as a solution $V_l$ to equation \eqref{eq.heat} from $G_1\cap G_3 $ 
to $G_3$, i.e.  to a neighbourhood of the point $(y_0,t_0)$. In particular, since any solution to 
the operator ${\mathcal L}$ in $G_2$ is real analytic 
with respect to the space variables in $G_2(t) $ for each $t \in (T_1, T_2)$, then 
 this extension is unique on $G_3\setminus (y_0,t_0)$.
This means the vector function $V_l \in S_{\mathcal L} (G_3)$
coincides with the $l$-th vector column $U_l$ of the fundamental matrix $\Phi (x-y_0,t-t_0)$ in 
$G_3 \setminus (y_0,t_0))$. Thus, we obtain a contradiction because for the matrix $V (x,t)$ with columns $V_l$, $1\leq l \leq k$  we have ${\mathcal L}V  =0$ in $G_3$  
  but ${\mathcal L} \Phi  (x,y_0,t,t_0)$ coincides with the $\delta$-functional 
concentrated at the point $(y_0,t_0)$ multiplied on $(k\times k)$-unit matrix. The necessity is proved.  

Next, we proceed with the sufficiency. 
Actually we slightly modify  the proof from \cite{D80} for the solutions  
to the heat equation, adapting it to the topology  of the anisotropic  Sobolev spaces, cf. 
\cite{ShHeat}, \cite{VSh2024} for the approximation in the Lebesgue spaces. 

Indeed, let for each $t \in  (T_1,T_2)$ 
the set  $G_2(t)\setminus G_1 (t) $ have no compact (non-empty) components in the set $G_2 (t)$. 
The Hahn-Banach Theorem implies that $G_1$, $G_2$ is a 
${\mathcal L}$-Runge's pair in the sense of the present theorem 
if and only if any continuous linear functional $f$ on $\mathbf{H}^{\gamma, 2ms,s} _{k,\mathcal L}(G_1)$ annihilating the space $S_{\mathcal L}(\overline G_2)$ also annihilates the space 
$\mathbf{H}^{\gamma, 2ms,s} _{k,\mathcal L}(G_2)$.

As we have noted above, the space $\mathbf{H}^{\gamma, 2ms,s} _{k,\mathcal L}(G)$ is a closed subspace of the space 
$\mathbf{H}^{\gamma, 2ms,s} _{k}(G)$. Then any 
continuous linear functional $f$  on $\mathbf{H}^{\gamma, 2ms,s} _{k,\mathcal L}(G_1)$
can be extended as a continuous linear functional $F$ on $\mathbf{H}^{\gamma, 2ms,s} _{k}(G_1)$, i.e. as an 
element of the Sobolev space $\tilde {\mathbf{H}}^{-\gamma, -2ms,-s} _{k}(G_1)$.
In particular, 
$F$ can be identified as a distrubition $\psi$ on ${\mathbb R}^{n+1}$ supported in $\overline G_1$ and 
having a finite order of singularity:
\begin{equation} \label{eq.Riesz}
F(u) = \langle u , \psi  
\rangle \mbox{ for all } u \in \mathbf{H}^{\gamma, 2ms,s} _{k}(G_1).
\end{equation}
Let $W$ be the vector function, with components obtained by applying the functional $F$ to the corresponding columns of the 
matrix $(x,t) \to \Phi (x-y,t-\tau)$:  
\begin{equation*}
W(y,\tau)= \langle \Phi^* (x,y, t,\tau) , \psi (x,t) \rangle .
\end{equation*}
Clearly, it is well-defined outside the support of $\psi$. By  \eqref{eq.right}, for any vector 
$\varphi \in {\mathbf C}^\infty_{0,k} ({\mathbb R}^{n+1})$ we have  
$$
{\mathcal L}_{x,t} \, \langle \Phi^* (x,y, t,\tau) , \varphi (y,\tau) \rangle = \varphi (x,t),
$$
and then, the hypoellipticity of ${\mathcal L}$ implies that the vector function 
$$
V_\varphi(x,t)  = \langle \Phi^* (x,y, t,\tau) , \varphi (y,\tau) \rangle $$ 
belongs to 
$\mathbf{C}^{\infty}_k ({\mathbb R}^{n+1})$. In particular, the vector 
function $W$ can be extended as a distribution to ${\mathbb R}^{n+1} $ via 
$$
\langle W, \varphi \rangle = \langle V_\varphi, \psi \rangle, 
$$
because $\psi$ is a ($k$-vector valued) distribution with compact support. 

Next, according to \eqref{eq.right}, columns of the matrix $\Phi (x,t,y,\tau)$ belong to 
$S_{\mathcal L} (\overline G_2)$ with respect to variables $(x,t)$ for each fixed $(y,\tau) \not \in
\overline  G_2$.
Hence, 
if $F\in (\mathbf{H}^{\gamma, 2ms,s} _{k}(G_1))^*$ annihilates the space $S_{\mathcal L}(\overline G_2)$ then we have  
\begin{equation}
\label{eq.v1.A}
W(y,\tau)=  0 \mbox{ for all } 
(y,\tau) \not \in \overline G_2.
\end{equation}
But \eqref{eq.left} implies that 
\begin{equation} \label{eq.L*W.1}
{\mathcal L}^* W = \psi \mbox{ in } {\mathbb R}^{n+1} ,  
\end{equation}
in the sense of distributions and, in particular, 
\begin{equation} \label{eq.L*W.2}
{\mathcal L}^* W = 0 \mbox{ in } 
{\mathbb R}^{n+1}  \setminus \overline G_1.
\end{equation}
Note that the operator ${\mathcal L}^*$ is backwards-parabolic and, for any 
solution $v(y,\tau)$ to the equation ${\mathcal L}^* v =0$, the vector 
$w(y,\tau)=v(y,-\tau)$  is a solution to the strongly parabolic system of equations  
$(\partial_\tau - L^* _y) w =0$ with constant coefficients. 
Thus, by the hypoellipticity of such systems, $W(y,\tau) \in \mathbf{C}^{\infty}_k
 \big({\mathbb R}^{n+1} \setminus \overline G_1 \big)
$ and, 
in particular, it is $C^{\infty}$-smooth with respect to  $y$ in ${\mathbb R}^n 
\setminus \overline G_1 (\tau)$ for each $\tau \in \mathbb R$ where, as before,  
$\overline G_1 (\tau)= \{x\in {\mathbb R}^n: (x,\tau) \in \overline G_1\}$.

As the domains $G_1\subset G_2$ satisfy assumptions $\mathrm{(A)}$, 
$\mathrm{(A_1)}$, $\mathrm{(A_2)}$ and $G_2 \ne {\mathbb R}^{n+1} $, the components of sets 
${\mathbb R}^n\setminus   G_2 (t) \subset {\mathbb R}^n\setminus G_1 (t)$ are either empty sets 
or closures of Lipschitz domains. 
Since the  set $G_2 (t) \setminus G_1 (t)$ has no compact components in $G_2 (t)$, we see that 
 each bounded component of  ${\mathbb R}^{n} \setminus \overline {G_1 (t)} $ intersects with
 ${\mathbb R}^{n} \setminus  \overline {G_2 (t)} $ by a non-empty open set for each $t\in (T_1,T_2)$. 
Hence, as solutions to the backwards parabolic operator ${\mathcal L}^*$ are real analytic 
with respect to the space variables $x$, the vector $W$ vanishes on every bounded component 
of ${\mathbb R}^{n} \setminus \overline {G_1 (t)} $ for each $t\in (T_1,T_2) $.
Next, let $\hat G _j (t) $ be the union of $G _j (t)$ with all the 
components of the set $G _j (t)$ that are relatively compact in 
${\mathbb R}^n$. By the discussion above, the closure of $\hat G _1 (t)$ lies in the closure 
of $\hat G _2 (t) $. Then, by De Morgan’s Law 
we have 
\begin{equation} \label{eq.DeM}
\Big( {\mathbb R}^n \setminus \overline{\hat G _1 (t) }\Big)\cap 
\Big( {\mathbb R}^n \setminus \overline{\hat G _2 (t) }\Big) =   
{\mathbb R}^n \setminus \Big( \overline{\hat G _2 (t) } \cup \overline{\hat G _1 (t) }  \Big)= 
{\mathbb R}^n \setminus  \overline{\hat G _2 (t) }.
\end{equation}
In particular, this means that the vector $W$ vanishes on unbounded components of the set 
${\mathbb R}^n \setminus \overline {G _1 (t)} $ for each $t\in (T_1 (G_1), T_2 (G_1))$, too. 
Thus, \eqref{eq.v1.A} and the real analyticity  
with respect to the space variables of solutions to  
the operator ${\mathcal L}^*$ imply that 
\begin{equation*} 
W (y,\tau) = 0   \mbox{ in }{\mathbb R}^{n} \setminus \overline{G_1 (\tau)}
\mbox{ for all } \tau\in {\mathbb R}, 
\end{equation*}
i.e. the vector $W$  is supported in $\overline G_1$. 

Using Lemma \ref{l.appr}, we approximate the distribultion $\psi \in 
\tilde {\mathbf{H}}^{-\gamma, -2ms,-s} _{k}(G_1)$ by a sequence 
$\{ \psi_i \} \subset \mathbf{C}^\infty _{0,k} (G_1)$ in the space 
$\tilde {\mathbf{H}}^{-\gamma, -2ms,-s} _{k}(G_1)$. Then for vectors 
\begin{equation*}
W_i(y,\tau)= \langle \Phi^* (x,y, t,\tau) , \psi_i (x,t) \rangle 
\end{equation*}
we have 
\begin{equation} \label{eq.L*Wi.1}
{\mathcal L}^* W_i = \psi_i \mbox{ in } {\mathbb R}^{n+1} ,  
\end{equation}
and, in partucilar, by the hypoellipticity of parabolic operators with constant 
coefficients, $\{ W_i\} \subset \mathbf{C}^\infty _k ({\mathbb R}^{n+1})$. Since 
$\mathbf{C}^\infty _k ({\mathbb R}^{n+1}) \subset \tilde {\mathbf{H}}^{-\gamma, 2m(1-s),1-s} _{k}(G_1)$, 
Lemma \ref{l.appr} provides that 
we may approximate each vector $W_i$ by a sequence $\{ V_{i,j} \} \subset 
\mathbf{C}^\infty _{0,k} (G_1)$ in the space $\tilde {\mathbf{H}}^{-\gamma, 2m(1-s),1-s} _{k}(G_1)$.

Again using Lemma \ref{l.appr} we see that 

\begin{equation} \label{eq.est.norm.1}
\|{\mathcal L}^* (V_{i,j} - W_i)\|_{\tilde {\mathbf{H}}^{-\gamma, -2ms,-s} _{k}(G_1)} = 
 \sup_{\|\varphi\|_{
{\mathbf{H}^{\gamma, 2ms,s} _{k}(G_1)}\leq 1} \atop {\varphi \in \mathbf{C}^\infty _{0,k} (G_1)} }
\Big|({\mathcal L}^* (V_{i,j} - W_i),\varphi)_{{\mathbf L}^2_k (G)}\Big| =
\end{equation} 
\begin{equation*} 
\sup_{{\|\varphi\|_{\mathbf{H}^{\gamma, 2ms,s} _{k}(G_1)}\leq 1} \atop {\varphi \in \mathbf{C}^\infty _{0,k} (G_1)} }
\Big|(V_{i,j} - W_i, {\mathcal L} \varphi)_{{\mathbf L}^2_k (G)}\Big| =
\end{equation*} 
\begin{equation*} 
\sup_{\|\varphi\|_{{\mathbf{H}
^{\gamma, 2ms,s} _{k}(G)}\leq 1} \atop {\varphi \in \mathbf{C}^\infty _{0,k} (G_1) ,
{\mathcal L} \varphi \ne 0 }} \frac{\Big|
(V_{i,j} - W_i, {\mathcal L} \varphi)_{{\mathbf L}^2_k (G_1)}\Big|}{\|{\mathcal L} \varphi\|
_{\mathbf{H}^{\gamma, 2m(s-1),s-1} _{k}(G_1)}} \|{\mathcal L} \varphi\|
_{\mathbf{H}^{\gamma, 2m(s-1),s-1} _{k}(G_1)}.
\end{equation*} 
On the other hand, the scale of spaces $\mathbf{H}^{\gamma, 2ms,s} _{k}(G)$ is constructed in such way, 
that any $2m$-parabolic operator ${\mathcal L}$ continuously maps  $\mathbf{H}^{\gamma, 2ms,s} _{k}(G)$ to
$\mathbf{H}^{\gamma, 2m(s-1),s-1} _{k}(G)$, $s \in \mathbb N$, i.e. there exists a positive constant 
$ C(s,\gamma, {\mathcal L},G)$ such that
\begin{equation} \label{eq.est.norm.2}
\|{\mathcal L} v\|_{\mathbf{H}^{\gamma, 2m(s-1),s-1} _{k}(G)} \leq C(s,\gamma, {\mathcal L},G) \, 
\| v\|_{\mathbf{H}^{\gamma, 2ms,s} _{k}(G)} \mbox{ for all }  v \in \mathbf{H}^{\gamma, 2ms,s} _{k}(G).
\end{equation}
Then, taking into the account \eqref{eq.est.norm.1} and \eqref{eq.est.norm.2}, we conclude that 
\begin{equation} \label{eq.est.norm.3}
\|{\mathcal L}^* (V_{i,j} - W_i)\|_{\tilde {\mathbf{H}}^{-\gamma, -2ms,-s} _{k}(G_1)} \leq  
C(s,\gamma, {\mathcal L},G_1)
\|V_{i,j} - W_i\|_{\tilde {\mathbf{H}}^{-\gamma, 2m(1-s),1-s} _{k}(G_1)}, 
\end{equation}
i.e. the sequence $\{ {\mathcal L}^* V_{i,j}\} $ converges to ${\mathcal L}^* W_{i}$ in the space 
$\tilde {\mathbf{H}}^{-\gamma, -2ms,-s} _{k}(G_1)$ as $j \to +\infty$.

Hence, it follows from \eqref{eq.Riesz},  \eqref{eq.L*Wi.1}, \eqref{eq.est.norm.3} and 
the contunity of the functional $F$ that for all 
$u\in {\mathbf H}^{\gamma, 2ms,s}_{k,\mathcal L}(G_1)$ we have
\begin{equation} \label{eq.supported}
F (u) = \langle u ,\psi  \rangle = \lim_{i\to \infty}\langle u ,\psi_i  \rangle =  
 \lim_{i\to \infty} \langle  u , {\mathcal L}^* W_i \rangle =
\end{equation} 
\begin{equation*} 
\lim_{i\to \infty} \lim_{j\to \infty} \langle  u , {\mathcal L}^* V_{i,j} \rangle = 
\lim_{i\to \infty} \lim_{j\to \infty} \langle  {\mathcal L} u ,  V_{i,j} 
\rangle = 0,
\end{equation*} 
because ${\mathcal L} u =0 $ in $G_1$ in the sense of distributions. 
Thus, $F$ annihilates ${\mathbf H}^{\gamma, 2ms,s}_{k,\mathcal L}(G_1)$, too, that was to be proved. 
\end{proof}

\end{document}